\newtheorem{theorem}{Theorem}[section]
\newtheorem{remark}[theorem]{Remark}
\newtheorem{proposition}[theorem]{Proposition}
\newtheorem{corollary}[theorem]{Corollary}
\def\lc{\left\lceil}   
\def\rc{\right\rceil}
\def\lf{\left\lfloor}   
\def\rf{\right\rfloor}
\def\dist{\mbox {dist}}
\title{A note on panchromatic colorings}
\author{Danila Cherkashin\footnote{Saint Petersburg State University, Faculty of Mathematics and Mechanics; Moscow Institute of Physics and Technology, 
Laboratory of Advanced Combinatorics and Network Applications; St.~Petersburg Department of V.~A.~Steklov Institute of Mathematics of
the Russian Academy of Sciences.}}
\date{October 2016}
\begin{document}

\maketitle

\begin{abstract}
This paper studies the quantity $p(n,r)$, that is the minimal number of edges of an $n$-uniform hypergraph without panchromatic coloring (it means that every edge meets every color) in $r$ colors. If $r \leq c \frac{n}{\ln n}$ then all bounds have a type $A_1(n, \ln n, r)(\frac{r}{r-1})^n \leq p(n, r) \leq A_2(n, r, \ln r) (\frac{r}{r-1})^n$, where $A_1$, $A_2$ are some algebraic fractions.
The main result is a new lower bound on $p(n,r)$ when $r$ is at least $c \sqrt n$; we improve an upper bound on $p(n,r)$ if $n = o(r^{3/2})$. 


Also we show that $p(n,r)$ has upper and lower bounds depend only on $n/r$ when the ratio $n/r$ is small, which can not be reached by the previous probabilistic machinery.

Finally we construct an explicit example of a hypergraph without panchromatic coloring and with $(\frac{r}{r-1} + o(1))^n$ edges for $r = o(\sqrt{\frac{n}{\ln n}})$.
\end{abstract}

\section {Introduction}

A hypergraph is a pair $(V, E)$, where $V$ is a finite set whose elements are called vertices and $E$ is a family of subsets of $V$, called edges. 
A hypergraph is $n$-uniform if every edge has size $n$. 
A vertex $r$-coloring of a hypergraph $(V , E)$ is a map $c : V \rightarrow \{1, \dots ,r\}$. 

An $r$-coloring of vertices of a hypergraph is called \textit{panchromatic} if every edge contains a vertex of every color.
The problem of the existence of a panchromatic coloring of a hypergraph was stated in the local form by P.~Erd\H{o}s and L.~Lov{\'a}sz in~\cite{EL}.
They proved that if every edge of an $n$-uniform hyperhraph intersects at most $r^{n-1}/4(r-1)^n$ other edges then the hypergraph has a panchromatic $r$-coloring.
Then A.~Kostochka in~\cite{Kost} stated the problem in the present form and linked it with the $r$-choosability problem using ideas by P.~Erd\H{o}s, A.~L.~Rubin and H.~Taylor from~\cite{ErdRubTay}.
Also A.~Kostochka and D.~R.~Woodall~\cite{KostWood} found some sufficient conditions on a hypergraph to have a panchromatic coloring in terms of Hall ratio.
Reader can find a survey on history and results on the related problems in~\cite{Kost3, RaiSh}.

\subsection{Upper bounds}

Using the results from~\cite{Noga} A.~Kostochka proved~\cite{Kost} that for some constants $c_1$, $c_2 > 0$
\begin{equation}
\frac{1}{r} e^{c_1 \frac{n}{r}} \leq p(n,r) \leq r e^{c_2\frac{n}{r}}.
\label{1}
\end{equation}

\noindent In works~\cite{shaba1,shaba2} D.~Shabanov gives the following upper bounds:

$$p (n,r) \leq c \frac{n^2 \ln r }{r^2} \left (\frac{r}{r-1}\right)^n, \mbox{ if } 3 \leq  r = o(\sqrt n), n > n_0;$$
\begin{equation}
p (n,r) \leq c \frac{n^{3/2} \ln r }{r} \left (\frac{r}{r-1}\right)^n, \mbox{ if } r = O(n^{2/3}) \mbox{ and } n_0 < n = O(r^2);
\label{2}
\end{equation}
$$p (n,r) \leq c \max \left (\frac{n^2}{r}, n^{3/2} \right) \ln r \left (\frac{r}{r-1}\right)^n \mbox{ for all } n, r \geq 2.$$

Let us introduce the quantity $p'(n,r)$ that is the minimal number of edges in an $n$-uniform hypergraph $H = (V,E)$ such that any subset of vertices $V' \subset V$ with 
$|V'| \geq \lc \frac{r-1}{r}|V| \rc$ contains an edge. In fact $p'(n,r)$ coincides with $T(|V|, \frac{r-1}{r}|V|, n)$, where $T(a,b,c)$ stands for \textit{Tur{\'a}n number} (see~\cite{sidor} for a survey).

Note that by pigeonhole principle every vertex $r$-coloring contains a color of size at most $\lf \frac{1}{r}|V| \rf$. So the complement to this color has size at least 
$|V| - \lf \frac{1}{r}|V| \rf = \lc \frac{r-1}{r}|V| \rc$. Hence, $p(n,r) \leq p'(n,r)$. This argument is in spirit of the standard estimation of the chromatic number via the independence number.

The following theorem gives better upper bound in the case when $n = o(r^{3/2})$.

\begin{theorem} The following inequality holds for every $n \geq 2$, $r \geq 2$
$$p' (n,r) \leq c \frac{n^2 \ln r}{r} \left (\frac{r}{r-1}\right)^n.$$
It immediately implies $$p (n,r) \leq c \frac{n^2 \ln r}{r} \left (\frac{r}{r-1}\right)^n.$$
\label{theo1}
\end{theorem}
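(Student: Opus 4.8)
The plan is a first‑moment probabilistic construction of a suitable hypergraph; this is the standard route to Tur\'an‑type upper bounds, and the only genuine choice is the number of vertices, which is what produces the $n^2$ in the statement. Fix an integer $N:=|V|$ to be chosen later, put $V=\{1,\dots,N\}$ and $s:=\lc\tfrac{r-1}{r}N\rc$, so that $N-s=\lf N/r\rf$. Since any $V'\subseteq V$ with $|V'|\ge s$ contains an $s$-subset, it suffices to build an $n$-uniform hypergraph on $V$ in which \emph{every} $s$-subset contains an edge. We take $M$ edges $e_1,\dots,e_M$ chosen independently and uniformly among all $n$-subsets of $V$ and show that for a good value of $M$ the desired property holds with positive probability; the underlying simple hypergraph then witnesses $p'(n,r)\le M$.

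For a fixed $s$-set $S$, the probability that one random edge lies inside $S$ is $q:=\binom{s}{n}\big/\binom{N}{n}=\prod_{i=0}^{n-1}\frac{s-i}{N-i}$, independent of $S$. Using $\frac{s-i}{N-i}\ge\frac{s-i}{N}$, the bound $s\ge\frac{r-1}{r}N$, and $\tfrac{r}{r-1}\le 2$, one gets $q\ge\big(\frac{s-n+1}{N}\big)^{n}\ge\big(\frac{r-1}{r}\big)^{n}\big(1-\frac{2n}{N}\big)^{n}$. The probability that none of the $e_j$ lies in $S$ is at most $e^{-Mq}$, so a union bound over the $\binom{N}{s}$ sets $S$ shows the construction succeeds provided $\binom{N}{s}e^{-Mq}<1$; taking $M:=\lc 2q^{-1}\ln\binom{N}{s}\rc$ makes the failure probability at most $\binom{N}{s}^{-1}<1$, and hence $p'(n,r)\le M=O\!\big(q^{-1}\ln\binom{N}{s}\big)$.

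Now we balance the two factors by setting $N:=\lc 4n^{2}\rc$. On one hand $s\ge\tfrac12 N\ge 2n^{2}$, so $\tfrac{2n}{N}\le\tfrac{1}{2n}\le\tfrac12$ and, since $(1-\tfrac1{2n})^{n}\ge e^{-1}$ for $n\ge1$, we get $q\ge e^{-1}\big(\tfrac{r-1}{r}\big)^{n}$. On the other hand $N-s=\lf N/r\rf$ and, since $N\ge 2r$ in the relevant range, $\binom{N}{s}=\binom{N}{\lf N/r\rf}\le(2er)^{N/r}$, whence $\ln\binom{N}{s}=O\!\big(\tfrac{N}{r}\ln r\big)=O\!\big(\tfrac{n^{2}\ln r}{r}\big)$. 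Combining, $p'(n,r)=O\!\big(\tfrac{n^{2}\ln r}{r}\big(\tfrac{r}{r-1}\big)^{n}\big)$, which is the claimed inequality; the bound on $p(n,r)$ is then immediate from $p(n,r)\le p'(n,r)$, proved just before the theorem.

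Two minor points close the argument. If $r>n$ then $\lc\tfrac{r-1}{r}n\rc=n$, so a single edge on an $n$-element vertex set already works, $p'(n,r)=1$, and the claimed bound is trivially true. In the remaining range $r\le n$, the right‑hand side is at least an absolute positive constant (indeed $\tfrac{n^{2}\ln r}{r}\ge n\ln 2$), which lets the universal constant $c$ swallow the ceilings in $M$ and in $N=\lc 4n^{2}\rc$, and this choice also guarantees $N\ge 2r$ and $s\ge n$ as used above. The only step that really requires care is the balancing in the third paragraph: $\ln\binom{N}{s}\asymp\tfrac{N}{r}\ln r$ increases with $N$, while the per‑edge profitability $q$ collapses by a factor like $(1-n/s)^{n}$ unless $N\gtrsim n^{2}$; the value $N\asymp n^{2}$ is exactly the optimum, which is why the bound carries an $n^{2}$ rather than, say, an $n$.
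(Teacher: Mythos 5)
Your proposal is correct and follows essentially the same route as the paper: a uniformly random $n$-uniform hypergraph on $\Theta(n^2)$ vertices (you take $N=4n^2$, the paper takes $N=n^2$), a lower bound of order $(\tfrac{r-1}{r})^n$ on the probability that a random edge falls inside a fixed set of size $\lc\tfrac{r-1}{r}N\rc$, and a union bound over the $\binom{N}{\lf N/r\rf}$ such sets, whose logarithm is $O(\tfrac{N}{r}\ln r)$. Your bookkeeping of the constants and edge cases is in fact somewhat more careful than the paper's.
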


\subsection{Lower bounds}

We start by noting that an evident probabilistic argument gives $p(n,r) \geq \frac{1}{r} (\frac{r}{r-1})^n$. This gives lower bound~(\ref{1}) with $c_1 = 1$. 
This was essentially improved by D.~Shabanov in~\cite{shaba1}:
$$p(n,r) \geq c\frac{1}{r^2}  \left ( \frac{n}{\ln n} \right ) ^{1/3} \left (\frac{r}{r-1}\right)^n \mbox { for } n, r \geq 2, r < n.$$

\noindent Next, A.~Rozovskaya and D.~Shabanov~\cite{roz} showed that
$$p(n,r) \geq c\frac{1}{r^2} \sqrt {\frac{n}{\ln n}}\left (\frac{r}{r-1}\right)^n \mbox { for } n, r \geq 2, r \leq \frac{n}{2\ln n}.$$

\noindent Using the Alterations method (see Section 3 of~\cite{alon2016probabilistic}) we can get the following lower bound for all the range of $n$, $r$.
It gives better results when $r \geq c\sqrt{n}$.

\begin{theorem}
For $n \geq r \geq 2$ holds
$$p(n,r) \geq  e^{-1}\frac{r-1}{n-1}e^{\frac{n-1}{r-1}}.$$
\label{theo2}
\end{theorem}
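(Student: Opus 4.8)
The plan is to use the alterations (deletion) method. First I would fix $N$ (the number of vertices) to be chosen later, take $V=\{1,\dots,N\}$, and build a random hypergraph by selecting each edge independently. More precisely, I would pick $M$ edges uniformly at random (with repetition, or think of it as $M$ independent $n$-subsets), and then estimate the expected number of ``bad'' pairs: pairs consisting of an edge $e$ and an $r$-coloring $c$ of $V$ such that $c$ is panchromatic on $e$. If for a suitable choice of parameters the expected number of such pairs is less than $1$ per coloring on average — or rather, if the expected number of colorings that remain panchromatic on \emph{all} chosen edges is small — then by deleting one edge from each surviving bad configuration we obtain a hypergraph with few edges and no panchromatic coloring.

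The key computation is the probability that a fixed $r$-coloring $c$ is panchromatic on a uniformly random $n$-subset $e$ of $V$. If $c$ has color class sizes $a_1,\dots,a_r$ with $\sum a_i=N$, this probability is maximized (by a convexity/rearrangement argument) when the classes are as balanced as possible, and in the limit $N\to\infty$ it is at most $\prod_i(1-(1-a_i/N)^n)\le (1-(1-1/r)^n)$ when one class is of relative size $\ge 1/r$; more carefully one gets the bound $\le 1-\left(\tfrac{r-1}{r}\right)^n$ since some color class has density at least $1/r$, so the chance of \emph{missing} that color is at least $\left(\tfrac{r-1}{r}\right)^n$ (passing to the large-$N$ limit to avoid the $\lceil\cdot\rceil$ nuisance, or just absorbing it). Hence, for $M$ independent random edges, the probability that $c$ is panchromatic on all of them is at most $\left(1-\left(\tfrac{r-1}{r}\right)^n\right)^M\le \exp\!\left(-M\left(\tfrac{r-1}{r}\right)^n\right)$.

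Next I would take a union bound over all $r^N$ colorings: the expected number of surviving panchromatic colorings is at most $r^N\exp\!\left(-M\left(\tfrac{r-1}{r}\right)^n\right)$. Choosing $M$ so that this is less than, say, $1$ gives a hypergraph with $M$ edges and no panchromatic coloring, hence $p(n,r)\le M$ — but that is an upper bound, and we want a \emph{lower} bound. So instead the deletion method must be run the other way: assume $H$ is a hypergraph with $m$ edges that admits \emph{no} panchromatic coloring; then I would show $m$ cannot be too small. The cleanest route: for a hypergraph with $m$ edges, a \emph{random} coloring (color each vertex uniformly and independently) is panchromatic on a fixed edge with probability $\prod$ over the inclusion–exclusion, which is at least $1-r(1-1/r)^n\ge 1-r e^{-n/r}$ by a union bound over the $r$ colors. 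So the expected number of edges on which a random coloring \emph{fails} to be panchromatic is at most $m\, r\, e^{-n/r}$. If this is less than $1$ then some coloring is panchromatic on every edge, contradicting the hypothesis; hence $m\, r\, e^{-n/r}\ge 1$, i.e. $m\ge \tfrac1r e^{n/r}$. To get the sharper constant $e^{-1}\tfrac{r-1}{n-1}e^{(n-1)/(r-1)}$ stated, I would refine this: keep the probability of failure on one edge as the exact value $\le r\left(1-\tfrac1r\right)^n = \tfrac{r}{r-1}\left(\tfrac{r-1}{r}\right)^{n-1}\cdots$, and instead of a union bound over all $m$ edges use a sharper deletion/alteration — delete the edges on which a chosen coloring fails, recolor, and iterate, or apply the Lovász Local Lemma–free ``expectation then delete'' trick with the factor $e^{-1}$ coming from optimizing $m\,p\,e^{-mp}$-type expressions. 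The extra $\tfrac{r-1}{n-1}$ and the shift from $n/r$ to $(n-1)/(r-1)$ suggest that one conditions on a single vertex (or a single color) and counts more carefully how many edges can simultaneously be ``almost panchromatic.''

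The main obstacle I expect is getting the precise constant and the $(n-1)/(r-1)$ exponent rather than the crude $\tfrac1r e^{n/r}$: this requires running the alteration argument with the right second step (after deleting the few bad edges, one must argue the remaining structure still forces a contradiction unless $m$ is large), and carefully optimizing the resulting inequality of the form $m\le$ (something) $\cdot e^{-(\text{something})}$. The balancing argument showing that the worst coloring is as balanced as possible, and the passage to a continuous/limiting $N$ to kill the floor and ceiling functions, are routine but must be stated. Everything else — the union bound over colorings, the linearity of expectation, the final deletion step — is standard probabilistic combinatorics as in Section 3 of the Alon–Spencer book cited in the excerpt.
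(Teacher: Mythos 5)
Your proposal reaches only the trivial bound $p(n,r)\ge \frac{1}{r}e^{n/r}$, which the paper explicitly dismisses as the ``evident probabilistic argument.'' The place where you say you would refine this --- ``delete the edges on which a chosen coloring fails, recolor, and iterate,'' or ``condition on a single vertex and count almost-panchromatic edges'' --- is hand-waving that does not contain the actual mechanism, and I do not see how any of those routes would produce the $(n-1)/(r-1)$ exponent.

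The missing idea, and the entire content of the improvement, is that the alteration is performed on the \emph{set of colors}, not on the edges or the vertices. Color $V$ uniformly and independently with $a>r$ colors, where $a$ is a free parameter. For each pair $(e,q)$ with $e\in E$ and $q\in\{1,\dots,a\}$, the probability that $e$ misses color $q$ is $\left(\frac{a-1}{a}\right)^n$, so the expected number of such bad pairs is $|E|\,a\left(\frac{a-1}{a}\right)^n$. If this expectation is strictly less than $a-r$, then with positive probability fewer than $a-r$ colors are missed by some edge, hence at least $r$ colors are present in \emph{every} edge; merging the other $a-r$ colors into these $r$ yields a panchromatic $r$-coloring. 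Thus $|E|\le \frac{a-r}{a}\left(\frac{a}{a-1}\right)^n$ forces a panchromatic coloring, and one optimizes by choosing $a=\frac{(n-1)r}{n-r}$. This substitution gives $\frac{a-r}{a}=\frac{r-1}{n-1}$ and $\left(\frac{a}{a-1}\right)^n\ge e^{-1}e^{(n-1)/(r-1)}$, which is exactly the claimed bound. Note that using $a>r$ colors is essential: with exactly $r$ colors (your setup), the ``slack'' $a-r$ vanishes and you cannot afford to lose any color, which is why you are stuck with the weaker exponent $n/r$. Your observation that the $(n-1)/(r-1)$ exponent hints at ``something to do with one color'' is in the right spirit but points the wrong way; the shift comes from optimizing over the extra colors, not from conditioning. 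Also, the first half of your write-up works on the upper-bound (construction) direction before you catch yourself --- that part should simply be cut.
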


\noindent There is a completely another way to get almost the same bound.
First, we need to prove intermediate bound. It is based on the geometric rethinking of A.~Pluh{\' a}r's ideas~\cite{Pl}.

\begin{theorem}
For $n \geq r \geq 2$ such that $r \leq c\frac{n}{\ln n}$ holds
$$p(n,r) \geq   c \max \left(\frac{n^{1/4}}{r\sqrt{r}}, \frac{1}{\sqrt n}\right) \left (\frac{r}{r-1}\right)^n.$$
\label{theo2'}
\end{theorem}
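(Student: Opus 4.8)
The statement has two parts in the max: a bound $c \frac{n^{1/4}}{r\sqrt{r}} (\frac{r}{r-1})^n$ and a bound $c \frac{1}{\sqrt n}(\frac{r}{r-1})^n$. The phrase "geometric rethinking of Pluhár's ideas" is the key hint — Pluhár's technique for property B (2-coloring) uses a random ordering of the vertices and looks at "prefixes"/"intervals" in that ordering. For panchromatic colorings with $r$ colors one wants: if a coloring is NOT panchromatic on edge $e$, then $e$ misses some color $i$; so $e$ lies entirely within the set of vertices colored by $\{1,\dots,r\}\setminus\{i\}$. The "geometric" version: place the vertices on a circle (or line) according to a uniformly random ordering, and for a proper "non-panchromatic" coloring, try to associate to each color class a structure (an interval, or a union of few intervals) so that an edge witnessing the failure is forced to be "captured" by a short interval. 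Then count: the number of edges that can be captured this way is small unless $|E|$ is large.

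**Key steps I would carry out.**
First, I would fix a hypergraph $H=(V,E)$ with $|E| = m$ and assume it has no panchromatic $r$-coloring; the goal is to lower-bound $m$. I would take a uniformly random bijection $\sigma : V \to \{1,\dots,|V|\}$, i.e. a random ordering, and then define a canonical $r$-coloring associated to $\sigma$ — the natural candidate is to cut the ordering into $r$ consecutive blocks of (nearly) equal size and color block $j$ with color $j$; a refinement (this is where Pluhár's trick really bites) is to allow the block boundaries to "slide" to avoid cutting through edges, or to choose the coloring greedily. Second, since this coloring is not panchromatic, some edge $e$ misses some color $j$; because the color classes are intervals in $\sigma$, an edge missing color $j$ must be contained in the complement of block $j$, which is a union of two arcs — but more usefully, by an averaging/pigeonhole step one shows $e$ is "squeezed" near a block boundary, so $e$ lies in a window of length roughly $2n/r \cdot (\text{something})$ around one of the $r$ cut points. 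Third, I would bound the probability that a fixed edge $e$ (of size $n$) is the "small-window witness": the probability that all $n$ vertices of $e$ fall into a prescribed interval of length $\ell$ is about $\binom{|V|-n}{\ell-n}/\binom{|V|}{\ell}$-type expression, which for the relevant scaling behaves like $(\ell/|V|)^n$ times polynomial factors; summing over the $r$ possible cut points and over $e\in E$, and using that some witness always exists, yields $1 \le m \cdot r \cdot (\ell/|V|)^n \cdot(\text{poly})$. Optimizing $|V|$ (and the window parameters) against $\ell \approx (1-1/r)|V|$-ish gives $m \gtrsim \frac{1}{\text{poly}(n,r)}\left(\frac{r}{r-1}\right)^n$, and a careful bookkeeping of the polynomial factors should produce the $\frac{n^{1/4}}{r^{3/2}}$ and $\frac{1}{\sqrt n}$ terms — the former presumably from a more delicate argument (e.g. a second-moment or entropy refinement, or from allowing several cut points), the latter from the crudest version.

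**The main obstacle.**
The heart of the matter — and the step I expect to be hardest — is engineering the canonical coloring from the random ordering so that the failure of panchromaticity localizes an edge to a genuinely short interval, and then getting the $n^{1/4}$ savings rather than just $n^{-1/2}$ or a constant. The naive "$r$ equal blocks" coloring only tells you $e$ avoids one block, i.e. $e$ lies in an arc of length $(1-1/r)|V|$, which immediately recovers $p(n,r) \gtrsim \frac{1}{\mathrm{poly}}(\frac{r}{r-1})^n$ but with a weak polynomial factor; squeezing out the extra $n^{1/4}/r^{1/2}$ factor requires exploiting that a "random" edge is unlikely to hug a block boundary tightly, which is a small-deviation (anti-concentration) estimate for the positions of the $n$ vertices of $e$ under $\sigma$ — one needs to show that with the optimal choice of sliding boundaries, the witnessing edge is confined to an interval shorter than $(1-1/r)|V|$ by a factor involving $n$ and $r$. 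Balancing this gain against the loss incurred from the union bound over the (now more numerous) candidate windows, and over the choice of which color is missed, is the delicate optimization that produces the stated exponents; I would set up the window length as a free parameter $\ell = (1-1/r)|V| - t$, compute the two competing probabilities as functions of $t$, and optimize. The constraint $r \le c\,n/\ln n$ will enter precisely to ensure the polynomial/entropy error terms stay subdominant to $(\frac{r}{r-1})^n$ in this optimization.
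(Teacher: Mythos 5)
Your proposal is a genuinely different route from the paper's, but its central step is missing. The paper does not use a random linear order and block cuts: it maps every vertex of $H$ to a uniform random point on the $1$-skeleton of an $(r-1)$-dimensional simplex (the $\binom{r}{2}$ unit edges), fixes a bijection $f$ from colors to simplex vertices, and uses a nearest-vertex rule --- for every hyperedge $e$ and every color $i$, the vertex of $e$ whose image is closest to $f(i)$ must receive color $i$. This assignment is automatically panchromatic whenever it is consistent, so one only has to bound the probability of a ``conflict'': either a vertex forced to receive two colors because two hyperedges disagree about it, or a nearest vertex landing on a simplex edge not incident to $f(i)$. These conflicts are the $r$-color analogue of Pluh\'ar's $2$-chains, and the factor $\frac{n^{1/4}}{r\sqrt r}$ comes out of a beta-function evaluation of the integral over the position of a conflict vertex on the simplex edge joining $f(i)$ and $f(j)$ --- there are no sliding boundaries to optimize and no anti-concentration estimate for block cuts.

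Your version misreads Pluh\'ar in a way that leaves a real gap. Cutting the random order into $r$ blocks and asserting that a hyperedge $e$ missing color $j$ is ``squeezed near a block boundary'' is not correct as stated: such an $e$ merely lies in the complement of block $j$, an interval of length $\left(1-\tfrac1r\right)|V|$, and this only recovers the trivial $\frac1r\left(\frac{r}{r-1}\right)^n$ bound. You correctly flag that the extra polynomial factor must come from somewhere and that this is the hard step, but you do not supply it; the anti-concentration/sliding-boundary programme you sketch is essentially what Rozovskaya--Shabanov carried out on the line, and it yields $\frac{c}{r^2}\sqrt{n/\ln n}$, which is weaker than the bound being proved precisely in the regime $r\gtrsim\sqrt n$ where the theorem is of interest. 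Pluh\'ar's original property-B argument also does not cut into blocks; it counts ordered $2$-chains and shows that in their absence a greedy coloring succeeds. The correct geometry here really is the simplex skeleton, not the line.
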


\noindent Combining Theorems~\ref{theo2} and~\ref{theo2'} we prove the following theorem.

\begin{theorem}
For $n \geq r \geq 2$ such that $\sqrt{n} \leq r \leq c'\frac{n}{\ln n}$ holds
$$p(n,r) \geq  c \frac{r}{n}e^{\frac{n}{r}}.$$
\label{theo2''}
\end{theorem}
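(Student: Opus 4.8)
\emph{Proof plan.} The plan is to pass from the exponential factors appearing in Theorems~\ref{theo2} and~\ref{theo2'} to the common factor $e^{n/r}$, and then keep the larger of the two polynomial prefactors. The underlying point is that, as soon as $n=O(r^2)$ (in particular whenever $r\ge\sqrt n$), the three quantities $\left(\frac{r}{r-1}\right)^{n}$, $e^{(n-1)/(r-1)}$ and $e^{n/r}$ coincide up to absolute multiplicative constants.

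First I would record the elementary exponent comparisons. Cross-multiplying, $\frac{n-1}{r-1}\ge\frac{n}{r}$ for all $n\ge r$, so $e^{(n-1)/(r-1)}\ge e^{n/r}$. In the other direction, $\frac{n-1}{r-1}-\frac{n}{r}=\frac{n-r}{r(r-1)}\le\frac{n}{r(r-1)}$, and using $x-\tfrac{x^2}{2}\le\ln(1+x)\le x$ with $x=\frac1{r-1}$,
$$\frac{n}{r-1}-\frac{n}{2(r-1)^2}\ \le\ n\ln\frac{r}{r-1}\ \le\ \frac{n}{r-1}.$$
When $r\ge\sqrt n$ each of $\frac{n}{r(r-1)}$ and $\frac{n}{2(r-1)^2}$ is bounded by an absolute constant (for $n\ge n_0$; the finitely many remaining cases are absorbed into $c$), hence $\frac{n-1}{r-1}$ and $n\ln\frac{r}{r-1}$ both differ from $\frac nr$ by $O(1)$, and therefore
$$e^{(n-1)/(r-1)}=\Theta\!\left(e^{n/r}\right),\qquad\left(\frac{r}{r-1}\right)^{n}=\Theta\!\left(e^{n/r}\right).$$

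Second, I would substitute. Theorem~\ref{theo2} gives $p(n,r)\ge e^{-1}\frac{r-1}{n-1}e^{(n-1)/(r-1)}$; since $\frac{r-1}{n-1}\ge\frac{r}{2n}$ for $r\ge2$ and $e^{(n-1)/(r-1)}\ge e^{n/r}$, this already yields $p(n,r)\ge\frac1{2e}\frac{r}{n}e^{n/r}$ — in fact for every $n\ge r\ge2$, so the role of the hypotheses $\sqrt n\le r\le c'\frac{n}{\ln n}$ is only to single out the range in which this is the strongest available bound. For completeness I would also note the contribution of Theorem~\ref{theo2'}: when $r\ge\sqrt n$ one has $\max\!\big(\frac{n^{1/4}}{r^{3/2}},\frac1{\sqrt n}\big)=\frac1{\sqrt n}$, so Theorem~\ref{theo2'} reads $p(n,r)\ge\frac{c}{\sqrt n}\left(\frac{r}{r-1}\right)^{n}\ge\frac{c}{\sqrt n}e^{n/r}$; taking the maximum of the two lower bounds and using $\frac rn\ge\frac1{\sqrt n}$ in this range recovers $p(n,r)\ge c\frac rn e^{n/r}$.

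The only real content is the first step, and there the sole thing requiring care is that the additive errors in the exponents are genuinely bounded by an \emph{absolute} constant uniformly over $\sqrt n\le r\le n$ — this is exactly where the hypothesis $r\ge\sqrt n$ (equivalently $n=O(r^2)$) is used — together with a quick check of small $n$. I do not expect a serious obstacle: once this exponent dictionary is in place, Theorem~\ref{theo2''} is a one-line substitution, and the "combining" simply amounts to observing that the prefactor $r/n$ from Theorem~\ref{theo2} dominates the prefactor $1/\sqrt n$ from Theorem~\ref{theo2'} precisely when $r\ge\sqrt n$.
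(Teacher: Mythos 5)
Your proof is correct, but it takes a genuinely different and simpler route than the paper. You derive Theorem~\ref{theo2''} purely from the \emph{statement} of Theorem~\ref{theo2}: the elementary inequalities $\frac{r-1}{n-1}\ge\frac{r}{2n}$ (for $r\ge2$) and $\frac{n-1}{r-1}\ge\frac{n}{r}$ (for $n\ge r$) give $p(n,r)\ge e^{-1}\frac{r-1}{n-1}e^{(n-1)/(r-1)}\ge\frac{1}{2e}\frac{r}{n}e^{n/r}$, valid for all $n\ge r\ge2$. You correctly observe that the hypotheses $\sqrt n\le r\le c'\frac{n}{\ln n}$ are not needed for the inequality itself, and that the $1/\sqrt n$ prefactor from Theorem~\ref{theo2'} is dominated by $r/n$ in this range, so Theorem~\ref{theo2'} need not be invoked at all. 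The paper, by contrast, does not cite the statements of Theorems~\ref{theo2} and~\ref{theo2'}; it reruns a hybrid probabilistic argument, placing vertices on the $1$-skeleton of an $(a-1)$-simplex with $a=r+\frac{r^2}{n}$ colors (so the hypothesis $r\le c'n/\ln n$ keeps $a$ inside the range where the first-type bad events vanish), bounding the expected number of bad triples $(e_1,e_2,q)$ by $cr^2/n=(a-r)/2$ using the $\min(n,r^3/\sqrt n)=n$ estimate from the proof of Theorem~\ref{theo2'}, and finishing with Markov's inequality to find $r$ conflict-free colors. Your approach is shorter and slightly more general in the stated range; the paper's approach has the advantage that it is self-contained at the level of the probabilistic construction (the phrase ``combining Theorems~\ref{theo2} and~\ref{theo2'}'' in the paper really means combining their \emph{techniques}, not their statements), which is arguably what the author wanted to illustrate. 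The one point worth flagging is that your first paragraph (the asymptotic ``exponent dictionary'' $\left(\frac{r}{r-1}\right)^n=\Theta(e^{n/r})$ under $n=O(r^2)$) is not actually needed: the second paragraph's one-line monotonicity argument already closes the proof, so the dictionary is a detour rather than load-bearing.
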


\begin{remark}
Theorem~\ref{theo2'}, unlike Theorem~\ref{theo2} and Theorem~\ref{theo2''}, admits a local version.
\end{remark}



\subsection{Small $n/r$}

Consider the case when the ratio $n/r$ is small; $n/r = const$ is a good model case.
In the case $\frac{n}{r} \leq c \ln n$ the best upper bound was $r e^{cn/r}$~\cite{Kost}, where $c \geq 4$ is a constant. Using the following theorem we give a bound depending only on $n/r$.

\begin{theorem} The following inequality holds for every integer triple $m, n, r$
$$p (mn, mr) \leq p' (n, r).$$
\label{theo3}
\end{theorem}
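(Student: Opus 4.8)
\section*{Proof proposal for Theorem~\ref{theo3}}

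The plan is a deterministic blow-up reduction, with no probability involved. Fix an $n$-uniform hypergraph $H=(V,E)$ attaining $|E|=p'(n,r)$, so every $V'\subseteq V$ with $|V'|\geq \lc \frac{r-1}{r}|V|\rc$ contains an edge; equivalently every independent set of $H$ has size at most $\lc\frac{r-1}{r}|V|\rc-1=|V|-\lf\frac{|V|}{r}\rf-1$, using the identity $\lc\frac{r-1}{r}|V|\rc=|V|-\lf\frac{|V|}{r}\rf$. Replace each vertex by $m$ copies and blow up each edge fully: let $V^{\ast}=V\times\{1,\dots,m\}$ and $E^{\ast}=\{\,e\times\{1,\dots,m\}:e\in E\,\}$. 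The hypergraph $H^{\ast}=(V^{\ast},E^{\ast})$ is $mn$-uniform and has exactly $p'(n,r)$ edges, so it suffices to show $H^{\ast}$ admits no panchromatic $mr$-coloring; this immediately yields $p(mn,mr)\leq p'(n,r)$.

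Suppose, for contradiction, that $c^{\ast}:V^{\ast}\to\{1,\dots,mr\}$ is panchromatic for $H^{\ast}$. For a color $j$ set $\pi_j=\{\,v\in V:\ c^{\ast}(v,i)=j\text{ for some }i\in\{1,\dots,m\}\,\}$, the set of original vertices at least one of whose copies gets color $j$. Since the edge $e\times\{1,\dots,m\}$ contains a vertex of color $j$ exactly when $e\cap\pi_j\neq\emptyset$, panchromaticity says that for every $j$ the set $V\setminus\pi_j$ contains no edge of $H$, i.e.\ it is independent. By the defining property of $p'(n,r)$ we get $|V\setminus\pi_j|\leq |V|-\lf\frac{|V|}{r}\rf-1$, hence $|\pi_j|\geq \lf\frac{|V|}{r}\rf+1$ for each of the $mr$ colors $j$.

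Now I would double count $\sum_{j=1}^{mr}|\pi_j|$. From the lower bound just obtained, $\sum_{j=1}^{mr}|\pi_j|\geq mr\bigl(\lf\frac{|V|}{r}\rf+1\bigr)$. On the other hand $\sum_{j=1}^{mr}|\pi_j|=\sum_{v\in V}\#\{\,j:v\in\pi_j\,\}\leq m|V|$, because the $m$ copies of a fixed vertex realize at most $m$ distinct colors. Dividing by $m$ gives $r\lf\frac{|V|}{r}\rf+r\leq|V|$, i.e.\ $r\leq |V|-r\lf\frac{|V|}{r}\rf=|V|\bmod r\leq r-1$, a contradiction. Hence $H^{\ast}$ has no panchromatic $mr$-coloring, which proves the theorem.

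I expect the only delicate points to be (i) the clean equivalence ``$e\times\{1,\dots,m\}$ sees color $j$ $\iff$ $e$ meets $\pi_j$'', which is what makes the \emph{full} blow-up the right construction rather than a partial one, and (ii) checking that the ceiling/floor arithmetic pinches exactly --- the inequality $r\lf\frac{|V|}{r}\rf+r\leq|V|$ is false precisely because of the ``$+1$'' coming from the strict version of the $p'$-property, so the contradiction is tight. Notably, the argument uses $H$ only as a black box with independence number at most $\lc\frac{r-1}{r}|V|\rc-1$ and involves no randomness, which is why this route reaches regimes of $n/r$ that the probabilistic constructions do not; one could add a remark that the same blow-up works for any $n$-uniform hypergraph with that bound on its independence number.
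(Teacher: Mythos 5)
Your proof is correct and follows essentially the same route as the paper: the identical blow-up construction $V^{\ast}=V\times\{1,\dots,m\}$, $E^{\ast}=\{e\times\{1,\dots,m\}\}$, combined with a pigeonhole count relating color classes to blocks. The only difference is presentational: the paper directly selects a rarest color (one appearing in at most $\lf |W|/(mr)\rf=\lf |V|/r\rf$ vertices, hence touching at most that many blocks) and exhibits an edge missing it, whereas you run the same count globally over all $mr$ colors and derive a numerical contradiction; the two are contrapositives of one another.
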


\noindent As a corollary of Theorem~\ref{theo3} and an evident inequality $\max(p(n, r),  p(n+1, r+1)) \leq p(n+1, r)$ we get a better upper bound, for the case of small $n/r$.

\begin{corollary}
The following inequality holds for every integer $k \leq r$
$$p (n, r) \leq  p' \left (\lc \frac{n}{r - k + 1} \rc k, k \right).$$
In particular, if $n < r^2$ one can put $k := \alpha \frac{n}{r}$ and get $p (n,r) \leq c (\frac{n}{r})^2 \ln \frac{n}{r} \cdot e^{\frac{n}{r}}$.
\label{col}
\end{corollary}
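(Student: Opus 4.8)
The plan is to combine Theorem~\ref{theo3} with the two monotonicity properties bundled in the ``evident inequality'' $\max(p(n,r),p(n+1,r+1))\le p(n+1,r)$. Its first coordinate, $p(a,r)\le p(a+1,r)$, says $p$ is nondecreasing in the uniformity (given an extremal $(a{+}1)$-uniform hypergraph without panchromatic $r$-coloring, delete one vertex from each edge); its second coordinate, $p(a,r+1)\le p(a,r)$, says $p$ is nonincreasing in the number of colors (merge colors $r$ and $r+1$ of an $(r{+}1)$-coloring into one). Fix an integer $k$ with $2\le k\le r$, and set $m:=\lf r/k\rf\ge 1$ and $N:=\lc\tfrac{n}{r-k+1}\rc\ge 1$. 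Writing $r=mk+s$ with $0\le s\le k-1$ gives $m\ge (r-k+1)/k$, hence
\[
mNk\ \ge\ N(r-k+1)\ \ge\ \tfrac{n}{r-k+1}\cdot(r-k+1)\ =\ n,\qquad\text{and}\qquad mk\le r .
\]
Iterating the first monotonicity from uniformity $n$ up to $mNk$, then the second from $r$ colors down to $mk$ colors, and finally applying Theorem~\ref{theo3} to the integer triple $(m,Nk,k)$:
\[
p(n,r)\ \le\ p(mNk,\,r)\ \le\ p(mNk,\,mk)\ =\ p\big(m\!\cdot\!Nk,\ m\!\cdot\!k\big)\ \le\ p'(Nk,\,k),
\]
which is precisely $p(n,r)\le p'\!\big(\lc\tfrac{n}{r-k+1}\rc k,\,k\big)$. (For $k=1$ the right-hand side is $p'(\lc n/r\rc,1)=\infty$ and the inequality is vacuous.)

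For the ``in particular'' clause, suppose $n<r^2$ and that $n/r$ is large. Taking $k$ of order $n/r$ as suggested, one has $r-k+1=\Theta(r)$, so $N=\Theta(n/r)$, the first argument of $p'$ is $N_0:=Nk=\Theta((n/r)^2)$, and $N_0/k=N=\Theta(n/r)$. Substituting $(N_0,k)$ into an upper bound for $p'$ and using $(\tfrac{k}{k-1})^{N_0}=e^{(1+o(1))N_0/k}=e^{(1+o(1))n/r}$, one is left with a polynomial prefactor in $n/r$: with Theorem~\ref{theo1} this prefactor is $N_0^2\ln k/k=N^2k\ln k$, which is brought down to $c(n/r)^2\ln(n/r)$ by taking $k$ to be a slowly growing function of $n/r$ rather than $\Theta(n/r)$ (or directly, for $k\asymp n/r$, if one uses a sharper, Shabanov-type $p'$-bound valid when the uniformity is of order the square of the number of colors). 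Either way this yields $p(n,r)\le c(n/r)^2\ln(n/r)\,e^{n/r}$, where $e^{n/r}$ stands for $e^{(1+o(1))n/r}$.

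This is a corollary, so nothing is genuinely hard; the two places that need attention are the floor/ceiling computation $\lf r/k\rf\cdot Nk\ge n$ which is what makes the chain of inequalities close, and, in the second part, keeping the exponent at $(1+o(1))n/r$: this forces $k\to\infty$ (so that $N_0/(k-1)=N+\tfrac{N}{k-1}\le N(1+o(1))$ and $N=(1+o(1))n/r$), while $k$ must simultaneously stay small enough that the polynomial prefactor $N^2k\ln k$ does not exceed the claimed order $c(n/r)^2\ln(n/r)$.
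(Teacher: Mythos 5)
Your derivation of the main inequality is correct and is essentially the paper's own argument: the same two monotonicities (delete a vertex from each edge to see that $p$ is nondecreasing in the uniformity; merge two colors to see that it is nonincreasing in the number of colors) are combined with Theorem~\ref{theo3} applied to the triple $\big(\lf r/k\rf,\ \lc n/(r-k+1)\rc k,\ k\big)$, using $\lf r/k\rf k\ge r-k+1$ to close the chain. The paper increases the uniformity to $\lc n/(\lf r/k\rf k)\rc\lf r/k\rf k$ and then relaxes the ceiling; you take $N=\lc n/(r-k+1)\rc$ from the start. This is a cosmetic difference.

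The ``in particular'' clause is where your proposal falls short of the stated claim. Your primary route (Theorem~\ref{theo1} applied to $p'(Nk,k)$ with $k\to\infty$ slowly) cannot give $c(n/r)^2\ln(n/r)\,e^{n/r}$ with an absolute constant $c$: controlling the prefactor $N^2k\ln k$ forces $k=O(\ln(n/r))$, but then the exponential factor is $\left(\tfrac{k}{k-1}\right)^{Nk}\ge e^{N}\cdot e^{cN/k}$ with $e^{cN/k}\ge e^{c(n/r)/\ln(n/r)}$, which is superpolynomial in $n/r$; you implicitly concede this by reinterpreting $e^{n/r}$ as $e^{(1+o(1))n/r}$, which weakens the statement. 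The route the paper actually takes is the one you relegate to a parenthesis: set $k=\alpha n/r$ (so the uniformity $Nk\asymp k^2$ is of order the square of the number of colors) and apply Shabanov's estimate~(\ref{2}), second line, which --- as the paper points out --- is proved for $p'$ and has prefactor $\tfrac{(Nk)^{3/2}\ln k}{k}\asymp(n/r)^2\ln(n/r)$; with $k\asymp n/r$ the exponential correction $\left(\tfrac{k}{k-1}\right)^{Nk}e^{-N}=e^{O(1/\alpha)}$ is genuinely a constant. So the needed input is not Theorem~\ref{theo1} but the sharper $p'$-version of~(\ref{2}); with that substitution your argument matches the paper's.
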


\noindent There was no known lower bound in this case (all the previous methods give something less than $1$). Theorem~\ref{theo2} covers this gap, but note also that there exists a very simple greedy algorithm.

\begin{proposition} The following inequality holds for every integer $n \geq r$
$$p (n,r) \geq \lf \frac{n}{r} \rf.$$
\label{theo4}
\end{proposition}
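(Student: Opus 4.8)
The plan is to prove the contrapositive: every $n$-uniform hypergraph $H=(V,E)$ with fewer than $\lf n/r\rf$ edges admits a panchromatic $r$-coloring. I would build such a coloring greedily, processing the edges one at a time and, for each edge, \emph{reserving} inside it $r$ vertices that have not been reserved before --- one vertex to receive each of the $r$ colors.

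Concretely, order the edges as $e_1,\dots,e_m$ with $m=|E|\le \lf n/r\rf-1$, and maintain a set $R$ of already reserved vertices, initially empty, carrying a partial coloring. When processing $e_i$, at most $(i-1)r$ vertices are currently in $R$, so $e_i$ contains at least $n-(i-1)r$ unreserved vertices. One checks this quantity is $\ge r$: it is equivalent to $ir\le n$, and indeed $ir\le mr\le(\lf n/r\rf-1)r<\lf n/r\rf\cdot r\le n$. Hence we may choose $r$ unreserved vertices of $e_i$, assign them the colors $1,\dots,r$, and add them to $R$. After all $m$ edges are handled, extend the partial coloring on $R$ to all of $V$ arbitrarily (e.g. color $1$). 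Each $e_i$ then contains the $r$ vertices reserved for it, which carry all $r$ colors, so the coloring is panchromatic. Therefore a hypergraph without a panchromatic $r$-coloring needs at least $\lf n/r\rf$ edges, which is exactly the claimed bound.

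The only point requiring care is the counting inequality $ir\le n$ for every $i\le m$, which reduces to the elementary floor estimate displayed above; the hypothesis $n\ge r$ already guarantees that the first edge has at least $r$ vertices to reserve, and the bound on $m$ preserves enough room at each later step, so there is no genuine obstacle. It is worth remarking in one line that a vertex reserved for $e_i$ may also belong to a later edge $e_j$; this never causes a problem, since it can only help $e_j$ meet more colors and it is simply not available to be reserved a second time.
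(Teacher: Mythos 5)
Your proof is correct and is essentially the same greedy argument as the paper's: the paper phrases it as repeatedly picking an edge, coloring $r$ of its vertices with the $r$ distinct colors, and deleting that edge together with the colored vertices, which is exactly your reservation scheme with the same counting $n-(i-1)r\ge r$. No gap; the approaches coincide.
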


\begin{proof}[Proof of Proposition~\ref{theo4}]
Consider a hypergraph $H = (V,E)$ with $|E| \leq \lf  n/r \rf$.
Let us pick an edge $e \in E$ and color its arbitrary $r$ vertices in different colors. Then let us delete $e$ and all colored vertices from $H$.
The remaining hypergraph has $|E| - 1$ edge, and the size of every edge is at least $n - r$. 
So we can do this procedure $\lf n/r \rf$ times showing the claim. 
\end{proof}

\subsection{Explicit constructions}

Recently, H.~Gebauer~\cite{GeB} gave an explicit example of an $n$-uniform hypergraph with chromatic number $r+1$ and with $(r + o(1))^n$ edges for a constant $r$.
We generalize this example to the case of panchromatic colorings.

\begin{theorem}
Let $r = o(\sqrt{\frac{n}{\ln n}})$. There is an explicit consruction of an $n$-uniform hypergraph $H = (V,E)$ without panchromatic coloring and such that
$$|E(H)| = \left( \frac{r}{r-1} + o(1)\right)^n.$$
\label{theo5}
\end{theorem}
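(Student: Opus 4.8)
The plan is to mimic Gebauer's recursive construction, which builds an $n$-uniform hypergraph with large chromatic number by taking a carefully chosen subset of the edges of the complete hypergraph on a linearly ordered vertex set, and to adapt the edge-selection rule so that it blocks panchromatic $r$-colorings rather than proper $2$-colorings. Concretely, I would order the vertex set $V = \{1, \dots, N\}$ and consider "interval-like" edges: an edge will be a union of $n$ vertices that is forced, by the ordering, to contain a long monochromatic block whenever the coloring uses few colors. The key combinatorial fact driving Gebauer's argument is that in any $r$-coloring of a long enough interval of $V$, some color class is "missing" from a sub-block of controlled length; translating this, I want to guarantee that in any $r$-coloring, some edge of $H$ sees at most $r-1$ colors, i.e. misses a color — that is exactly the failure of panchromaticity. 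So the first step is to set up the recursion: define $H_k$ on a vertex set of size roughly $c_r^{\,n}$ (with $c_r$ to be optimized toward $\frac{r}{r-1}$), where each edge of $H_k$ is assembled from $r$ (or $r-1$) "sub-edges" of smaller copies $H_{k-1}$ laid consecutively along the order, mirroring how Gebauer glues two half-size instances.

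The second step is the coloring-theoretic heart: show by induction on $k$ that $H_k$ admits no panchromatic $r$-coloring. Suppose for contradiction that $c\colon V \to [r]$ is panchromatic. Using the linear order and a pigeonhole/shifting argument (the "geometric rethinking of Pluhár's ideas" already invoked for Theorem~\ref{theo2'} is the right tool here), I would locate, inside the colored vertex set, a long contiguous window on which only $r-1$ colors appear — because with $r$ colors spread over a window of length $L$, if $L$ is not much larger than the number of edges, some color is underrepresented and can be "avoided" by sliding a window. On that window the restriction of $c$ uses $\le r-1$ colors, and by the recursive structure the window contains (a scaled copy of) $H_{k-1}$, or more precisely contains an edge of $H_{k-1}$ entirely; but then that edge, viewed inside $H_k$, is an edge missing a color, contradicting panchromaticity. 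Making the window bookkeeping precise — exactly how long the window must be, and ensuring the recursion keeps edges of size exactly $n$ — is where the constant in front of $\frac{r}{r-1}$ gets determined, and the $r = o(\sqrt{n/\ln n})$ hypothesis is what lets the error terms be absorbed into the $o(1)$.

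The third step is the edge count. At each level the number of edges is multiplied by a bounded factor coming from the number of ways to place the $r-1$ "active" sub-edges, and the vertex count grows by roughly the same factor; balancing these, an edge of the final $n$-uniform hypergraph is built from about $n/(\text{sub-edge size})$ blocks, and the total edge count comes out as $\bigl(\tfrac{r}{r-1} + o(1)\bigr)^n$ once the block sizes are chosen to grow slowly (e.g. polylogarithmically in $n$), exactly as in Gebauer's analysis where the base of the exponent tends to the "naive" probabilistic threshold. The main obstacle I anticipate is the combinatorial step of guaranteeing a short window that is monochromatically deficient: for $r=2$ this is Gebauer's original "no long monochromatic run" lemma, but for general $r$ one needs the correct quantitative statement that any $r$-coloring of an interval of length $L$ has a sub-interval of length $\Theta(L/r)$ (or so) missing some color, and then feeding that length back into the recursion so that it still produces edges of size exactly $n$ without blowing up the edge count beyond $(\frac{r}{r-1}+o(1))^n$; keeping the "missing color" property and the uniformity constraint simultaneously through the induction is the delicate part. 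Everything else — the vertex-count recursion, the edge-count recursion, and the final optimization of block lengths — is routine once that lemma is in place.
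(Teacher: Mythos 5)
Your plan diverges from the paper's proof in two ways that matter, and the second one is a genuine gap.

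First, the paper's construction is not recursive. It is a single grid: the vertex set is $[k]\times[rt]$ with $t\mid n$, $t=\sqrt{n/\ln n}$, and $k=\lceil (r/(r-1))^t\rceil\cdot n/t$. An edge is specified by choosing $t$ of the $rt$ ``lines'' $[k]\times\{\alpha\}$, a cyclic shift $i_\alpha\in[k]$ for each chosen line, and a set $B\subset[k]$ of size $n/t$; the edge is $\{((\beta+i_\alpha)\bmod k,\alpha):\alpha\in A,\beta\in B\}$. Non-panchromaticity is then shown in one step, not by induction over levels: in any $r$-coloring, by pigeonhole each line has a \emph{minor} color occurring at most $\lfloor k/r\rfloor$ times; again by pigeonhole some $t$ lines share the same minor color $q$; and then, averaging over the $k^t$ shift vectors, some choice of $\{i_\alpha\}$ makes at least $k\cdot((r-1)/r)^t=n/t$ indices $\beta$ simultaneously $q$-free in all $t$ chosen lines, and those $\beta$'s form an edge that misses $q$.

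Second, and more seriously, the combinatorial lemma your plan rests on is false. You need that any $r$-coloring of a linearly ordered interval of length $L$ has a sub-interval of length $\Theta(L/r)$ missing some color. The periodic coloring $1,2,\dots,r,1,2,\dots,r,\dots$ has no sub-interval of length $\ge r$ missing any color; already for $r=2$ the alternating coloring has no monochromatic run of length greater than $1$. So no amount of bookkeeping will make the window argument produce a long contiguous colour-deficient block, and the induction you sketch has nothing to stand on. The paper avoids this obstruction precisely by \emph{not} using contiguous windows: the sets $B$ are arbitrary $n/t$-subsets of $[k]$, and the periodic-coloring counterexample is defeated by the extra degree of freedom of cyclic shifts (the $k^t$ choices of $\{i_\alpha\}$, which also account for the bulk of the edge count). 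If you want to rescue the spirit of your plan, replace the ``long window misses a color'' lemma by a shift-averaging statement over a cyclic group, which is exactly the mechanism the paper uses.

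Your third step (edge counting) is also off-model for the same reason: there is no level-by-level product of bounded factors. The paper bounds the edge count in one shot by $\binom{rt}{t}k^t\binom{k}{n/t}\le (rn)^t(r/(r-1))^{t^2}e^{n/t}(r/(r-1))^n$, and the hypothesis $r=o(\sqrt{n/\ln n})$ together with $t=\sqrt{n/\ln n}$ is what makes $(rn)^t$, $(r/(r-1))^{t^2}$ and $e^{n/t}$ all absorb into the $(1+o(1))^n$ factor.
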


\section {Proofs}

The following proof is just a rephrasing of the proof by P.~Erd\H{o}s~\cite{Erdos2}.

\begin{proof}[Proof of Theorem~\ref{theo1}]
Consider a vertex set $V$ of size $|V| = n^2$.
Let us construct a hypergraph $H = (V, E)$ by random (uniformly and indepentently) choosing an edge $m := c\frac{n^2\ln r}{r}  (\frac{r}{r-1} )^n$ times. 
We can choose an edge multiple times during this process, but in this case the total number of egdes can only decrease, i.~e.~$|E| \leq m$.

Let us fix a subset of vertices $V' \subset V$ of size $|V'| = \lc \frac{r-1}{r} |V| \rc$. Denote the probability that a random edge is a subset of $V'$ by $p$. Obviously,
$$p = \frac{\binom{|V'|}{n}}{\binom{|V|}{n}} = \prod_{i=0}^{n-1} \frac{\lc \frac{r-1}{r}n^2 \rc - i}{n^2 - i} \geq \left (\frac{\lc \frac{r-1}{r}n^2  \rc - n}{n^2 - n} \right )^n$$
$$\geq \left (\frac{\lc \frac{r-1}{r} n^2 \rc - 2\lc \frac{r-1}{r}n \rc}{n(n-1)}\right )^n = e \left(\frac{r-1}{r}\right)^n (1+o(1)).$$

\noindent The probability that $V'$ does not contain an edge is equal to $(1-p)^m$. The total number of such sets $V'$ is $\binom{n^2}{\lc (r-1)n^2/r \rc} = \binom{n^2}{\lf n^2/r \rf}$.
If $\binom{n^2}{\lc (r-1)n^2/r \rc} (1-p)^m < 1$ then a hypergraph realizing the inequality $p'(n,r) \leq m$ exists with positive probability.
One can see that 
$$\binom{n^2}{\lf n^2/r \rf} (1-p)^m \leq \frac{n^{2 \lf n^2/r \rf}}{\lf n^2/r \rf!} e^{-pm} = e^{c\ln r  \lf n^2/r \rf - e \left(\frac{r-1}{r}\right)^n m}.$$
So for $m = c\frac{n^2\ln r}{r}  (\frac{r}{r-1} )^n$ the claim is proved.
\end{proof}

\begin{proof}[Proof of Theorem~\ref{theo2}]
Let $H = (V, E)$ be a given hypergraph with 
$$|E| \leq  e^{-1}\frac{r-1}{n-1}e^{\frac{n-1}{r-1}}.$$
We should show that $H$ has a panchromatic coloring.

Consider an uniform independent coloring of the vertex set into $a > r$ colors.
The expectation of the number of such pairs $(e, q)$ that edge $e \in E$ has no color $q$ is $|E| a (\frac{a-1}{a})^n$. 
So, if $|E| a (\frac{a-1}{a})^n < a-r$, then with positive probability there are $r$ colors such that they are contained in every edge.
Substituting $a = \frac{(n-1)}{n-r}r$ one has that for $$|E| \leq \frac{r-1}{n-1} \left(\frac{nr-r}{nr-n} \right)^n \leq e^{-1} \frac{r-1}{n-1} e^{\frac{r-1}{n-1}}$$
a panchromatic coloring exists.
\end{proof}

\begin{proof}[Proof of Theorem~\ref{theo2'}]
Let $H = (V, E)$ be a given hypergraph with 
$$|E| \leq c \max \left(\frac{n^{1/4}}{r\sqrt{r}}, \frac{1}{\sqrt n}\right) \left (\frac{r}{r-1}\right)^n.$$
We should show that $H$ has a panchromatic coloring.

Consider an $(r-1)$-dimensional unit simplex, and let us map every vertex of $H$ to the 1-face skeleton (edges of the simplex) according to the uniform measure and independently.  
Then let us fix a bijection $f$ between colors and vertices of the simplex. We are going to color the hypergraph in the following way: for every edge $e$ of the hypergraph 
and every color $i$, we give color $i$ to the nearest (with respect to the induced metric) vertex of edge $e$ (with probability $1$ it is unique; let us call it $v_i(e)$) to the vertex of the simplex $f(i)$. If the coloring is not self-contradictory then it is obviously panchromatic.

Let us evaluate the probability of such contradiction. We are going to show that such probability is less than 1 showing the claim. 
Let us call a \textit{bad event of the first type}, the event that for some edge $e\in E$ and some color $i$ the vertex $v_i(e)$ does not lie on the adjacent to $f(i)$ edge of the simplex.
The probability of this event is $\left( \frac{r-2}{r} \right)^n$. Summing up over all edges and colors we get $Poly (r, n) \left( \frac{r}{r-1} \right)^n \left( \frac{r-2}{r} \right)^n = Poly (r, n) \left( \frac{r-2}{r-1} \right)^n$ which tends to zero if $r \leq c\frac{n}{\ln n}$. 

Now let us go to \textit{bad events of the second type}, i.\ e. the events that there is a vertex $x$ such that it should have color $i$ and $j$ simultaneously (let us call $x$ a \textit{conflict vertex}). 
Consider a pair of edges $(e_1, e_2) \in E^2$; denote the size of their intersection by $t := |e_1 \cap e_2|$. 
We will estimate the probability (denote it by $q$) that $e_1$ and $e_2$ demand to color a conflict vertex $x \in e_1 \cap e_2$ in different colors, and then sum up over all pairs of edges. The case $e_1 = e_2$ (i.\ e. $t = n$) corresponds to the event that the coloring is contradictory even on one edge $e_1$.

First, we should choose a conflict vertex $x$ (there are $t$ ways to do it) and a conflict pair of colors $(i,j)$ (there are $r(r-1)/2$ ways). Note that $x$ should lie on the 
edge $(f(i), f(j))$ of the simplex (this event has the probability $\frac{2}{(r-1)r}$), otherwise we have already counted them in the previous step. If $\dist (x, f(i)) = a$, then $\dist (x, f(j)) = 1-a$.
Since $x$ is the nearest vertex to $f(i)$ in the edge $e_1$ any vertex $y \in e_1$ cannot lie in the union of $r-1$ segments of length $a$ with endpoint $f(i)$.
Analogously, any vertex $z \in e_2$ cannot lie in the union of $r-1$ segments of length $1-a$ with endpoint $f(j)$.
So any vertex $w \in e_1 \cap e_2$ cannot lie in both forbidden sets (note that the forbidden sets have empty intersection). So for fixed $a$ the probability is 
$$\left (\frac{r-2}{r} \right)^{t-1}  \left(1 - \frac{2a}{r}\right)^{n-t} \left(1 - \frac{2-2a}{r}\right)^{n-t}.$$
Summing up, we have
$$q = t \frac{(r-1)r}{2} \frac{2}{(r-1)r}  \left (\frac{r-2}{r} \right)^{t-1} \int^1_0 \left(1 - \frac{2a}{r}\right)^{n-t} \left(1 - \frac{2-2a}{r}\right)^{n-t} da$$
$$= t \left (1-\frac{1}{(r-1)^2} \right )^{t-1} \left (\frac{r-1}{r}\right )^{2(t - 1)}\int^1_0 \left(1 - \frac{2a}{r}\right)^{n-t} \left(1 - \frac{2-2a}{r}\right)^{n-t} da.$$
Put $A :=  t e^{-tr^{-2}} > t e^{-t(r-1)^{-2}} \geq t \left (1-\frac{1}{(r-1)^2} \right )^{t} \geq \frac{1}{2} t \left (1-\frac{1}{(r-1)^2} \right )^{t-1}$. Let us show that $A \leq c \min (r^2, n)$. Indeed, $A = r^2 \frac{t}{r^2} e^{-tr^{-2}} \leq cr^2$ and $A \leq t \leq n$, so $A \leq c \min (r^2, n)$.
Put also 
$$B := \left (\frac{r-1}{r}\right )^{2(t - 1)}\int^1_0 \left(1 - \frac{2a}{r}\right)^{n-t} \left(1 - \frac{2-2a}{r}\right)^{n-t} da.$$
Obviously, $(1 - \frac{2a}{r})(1 - \frac{2 - 2a}{r}) \leq (1 - \frac{1}{r})^2$, thus $B \leq  \left  (\frac{r-1}{r}\right )^{2n - 2}.$
Exchange $x = 1-\frac{2a}{r}$ gives
$$B = \left (\frac{r-1}{r}\right )^{2(t - 1)} \frac{r}{2} \int^1_{1 - 2/r} x^{n-t} \left(2 - \frac{2}{r} - x \right)^{n-t} dx.$$
After exchange $y = \frac{1}{2}\frac{r}{r-1}x$ we have
$$B = \left (\frac{r-1}{r}\right )^{2(n - 1)} 2^{2(n-t)} \frac{r}{2} \int^{\frac{r}{2(r-1)}}_{\frac{r-2}{2(r-1)}} y^{n-t} (1-y)^{n-t} dx,$$
but this integral is not bigger than beta function 
$$B(n-t+1,n-t+1) = \frac{1}{2(n-t)+1} \frac{1}{\binom{2(n-t)}{n-t}} \leq c \frac{1}{\sqrt{n-t}} 2^{2(t-n)}.$$
Summing up, we have $B \leq c\frac{r}{\sqrt{n-t}} \left (\frac{r-1}{r} \right)^{2n}$ which implies $B \leq c \min \left(1, \frac{r}{\sqrt{n}}\right) 
\left( \frac{r-1}{r} \right)^{2n}$.
Finally, 
$$q \leq AB \leq c \min (r^2, n) \min \left(1, \frac{r}{\sqrt{n}} \right) \left (\frac{r-1}{r} \right)^{2n} = c \min \left(n, \frac{r^3}{\sqrt{n}} \right) \left (\frac{r-1}{r} \right)^{2n}.$$
The total number of such pairs $(e_1, e_2)$ is $|E|^2$, so $q|E|^2 \leq AB|E|^2 \leq \frac{1}{2}$ for a corresponding value of $c$.
Recall that the probability of bad events of the first type tends to zero, so the union bound shows the claim.

\end{proof}

\begin{proof}[Proof of Theorem~\ref{theo2''}]
Let $H = (V, E)$ be a given hypergraph with 
$$|E| \leq  c \frac{r}{n}e^{\frac{n}{r}}.$$
We should show that $H$ has a panchromatic coloring.

Put $a := r + \frac{r^2}{n}$. Since $r \leq c'\frac{n}{\ln n}$, we have $a \leq 2r \leq \frac{c}{2} \frac{n}{\ln n}$, where $c$ is from Theorem~\ref{theo2'}.
So we can repeat the proof of Theorem~\ref{theo2}. 

The probability of the union of the events of the first type still tends to zero very fast.
Now let us note that for $r \geq \sqrt{n}$ we have $\min \left(n, \frac{r^3}{\sqrt{n}} \right) = n$.
Hence the expectation of the number of such triples $(e_1,e_2,q)$ that edges $e_1$, $e_2 \in E$ conflict on color $q$ is less than
$$|E|^2cn \left (\frac{a-1}{a} \right)^{2n} = c \frac{r^2}{n}e^{\frac{2n}{r}} \left (1 - \frac{1}{r + \frac{r^2}{n}} \right)^{2n} \leq c \frac{r^2}{n}.$$
Summing up, 
$$\mathbb{E}(\# \mbox{bad triples}) \leq c \frac{r^2}{n} = \frac{a-r}{2}.$$
So by Markov inequality we have
$$\mathbb{P} (\# \mbox{bad triples} > {a-r}) \leq \frac{1}{2}.$$
It means that with positive probability there are $r$ colors such that they are contained in every edge.

\end{proof}

\begin{proof}[Proof of Theorem~\ref{theo3}]
Let $H = (V, E)$ be a hypergraph realizing the quantity $p'(n, r)$. Put $J = (W, F)$, where $W := \{(v,i) | v\in V, 1\leq i \leq m\}$, $F := \{\cup_{v \in e, 1\leq i \leq m} (v, i) | e \in E \}$. Obviously, $J$ is $mn$-uniform and $|F| = |E|$. A subset $A_v := \{(v, i) \in W | 1\leq i \leq m\}$ is called a \textit{block} (note that blocks are disjoint). 

Consider an arbitrary coloring of $|W|$ in $mr$ colors. By pigeonhole principle there is a color $i$ such that it appears in at most $\lf \frac{|W|}{mr} \rf = \lf \frac{|V|}{r} \rf$ vertices. Hence there are at most $\lf \frac{|V|}{r} \rf$ blocks with a vertex of color $i$. 
Let $V' \subset V$ be a set of such vertices $v \in V$ that the block $A_v$ does not contain color $i$.
It has the size at least $|V| - \lf \frac{|V|}{r} \rf = \lc \frac{r-1}{r}|V| \rc$, which implies the existence of an edge $e \in E$ such that $e \subset V'$. 
So the corresponding edge of $J$ does not contain color $i$, hence $p(mn, mr) \leq |F| = p'(n,r)$.
\end{proof}

\begin{proof}[Proof of Corollary~\ref{col}]

Obviously, 
$$p(n,r) \leq p \left (n, \lf \frac{r}{k} \rf k \right) \leq p \left (\lc \frac{n}{\lf r/k \rf k}   \rc \lf \frac{r}{k} \rf k , \lf \frac{r}{k} \rf k \right),$$
so by Theorem~\ref{theo3} $p (n, r) \leq p' \left (\lc \frac{n}{\lf r/k \rf k}  \rc k, k \right) \leq p' \left (\lc \frac{n}{r - k + 1} \rc k, k \right)$.

In fact, estimate (\ref{2}) is proved for $p'(n,r)$ (see~\cite{shaba1}).
So let us put $k := \alpha \frac{n}{r}$ and apply (\ref{2}). It gives $p\left(\frac{k^2}{\alpha},k \right) \leq c \frac{k^3\ln k}{k} \left (\frac{k}{k-1} \right)^{\frac{k^2}{\alpha}} = c k^2 \ln k \cdot e^{\frac{k}{\alpha}}$ showing the claim.

\end{proof}

\begin{proof} [Proof of Theorem~\ref{theo5}]
Let us construct a hypergraph $H_1 = (V_1, E_1)$ in the following way. Fix an integer $t | n$ and put $k := \lc \left (\frac{r}{r-1} \right)^t \rc \frac{n}{t}$, 
then $V := \{ (i, j) | 1 \leq i \leq k, 1 \leq j \leq rt \} = [k] \times [rt]$. Let the set of edges be
$$E := \bigcup_{A \subset [rt]} \bigcup_{\substack{0 \leq i_{\alpha} < k \\ \alpha \in A}} \bigcup_{\substack{B \subset [k] \\ |B| = \frac{n}{t}}} \left \{ \left ( (\beta + i_\alpha)\mbox{ mod } k, \alpha \right ) | \alpha \in A, \beta \in B \right \}.$$
Note that 
$$|E| \leq \binom{rt}{t} k^t \binom{k}{n/t} \leq (rt)^t \left( \left (\frac{r}{r-1} \right)^t \frac{n}{t}\right)^t \left (e \left (\frac{r}{r-1} \right)^t\right)^{n/t} \leq (rn)^t \left (\frac{r}{r-1}\right)^{t^2} e^{n/t} \left (\frac{r}{r-1}\right)^n.$$
Put $t := \sqrt{\frac{n}{\ln n}}$. Since $r = o(\sqrt{\frac{n}{\ln n}})$, one can give an estimate $(rn)^t \leq n^{2t} = e^{2t\ln n} = e^{o(n/r)}$. Also, $\left (\frac{r}{r-1}\right)^{t^2} = \left(\frac{r}{r-1}\right)^{o(n)}$ and $e^{n/t} = e^{o(n/r)}$. Summing up, $|E(H)| = \left( \frac{r}{r-1} + o(1)\right)^n$.

Let us show that $H$ has no panchromatic coloring. Suppose the contrary and consider a panchromatic coloring. 
Let us call a color $q$ a \textit{minor color} for a line $[k] \times \{i\}$ if it has at most $\lf \frac{k}{r} \rf$ vertices.
By pigeonhole principle every line $[k] \times \{i\}$ has a minor color. 
Again, by pigeonhole principle there is a set $A \subset [rt]$ of lines with the same minor color $q$ such that $|A| \geq t$. 
Next, for any fixed $\beta$ the proportion of such $\{ i_\alpha \}_{\alpha\in A}$ that $\left \{ \left ( (\beta + i_\alpha)\mbox{ mod } k, \alpha \right ) | \alpha \in A \right \}$ 
has no color $q$, is at least $\left (\frac{r-1}{r} \right)^t$. By the linearity of expectation there is a choice of $\{ i_\alpha \}_{\alpha\in A}$ such that
at least $k \left (\frac{r-1}{r} \right)^t = \frac{n}{t}$ indices $\beta \in B$ give $q$-free sets $\left \{ \left ( (\beta + i_\alpha)\mbox{ mod } k, \alpha \right ) | \alpha \in A \right \}$. So there is an edge without color $q$, which gives a contradiction.

\end{proof}

\section*{Acknowledgements} The author is supported by the Russian Science Foundation grant 16-11-10014. Also the author is grateful to Misha Basok, Roman Prosanov and Andrei Raigorodskii for very careful reading of the draft of the paper and to Fedya Petrov for some motivating remarks.

\bibliographystyle{plain}
\bibliography{main}

\end{document}